\theoremstyle{plain}
\newtheorem{theorem}{Theorem}[section]
\crefname{theorem}{Theorem}{Theorems}
\crefname{proposition}{Proposition}{Propositions}
\crefname{corollary}{Corollary}{Corollaries}
\newtheorem{lemma}[theorem]{Lemma}
\crefname{lemma}{Lemma}{Lemmas}
\newtheorem{conjecture}[theorem]{Conjecture}
\crefname{conjecture}{Conjecture}{Conjectures}
\crefname{problem}{Problem}{Problem}
\newtheorem{claim}[theorem]{Claim}
\crefname{claim}{Claim}{Claims}
\crefname{observation}{Observation}{Observations}
\crefname{setup}{Setup}{Setups}
\crefname{fact}{Fact}{Facts}
\crefname{algorithm}{Algorithm}{Algorithms}
\crefname{remark}{Remark}{Remarks}
\crefname{example}{Example}{Examples}
\theoremstyle{definition}
\newtheorem{definition}[theorem]{Definition}
\crefname{definition}{Definition}{Definitions}
\crefname{construction}{Construction}{Constructions}
\newtheorem{question}[theorem]{Question}
\crefname{question}{Question}{Questions}
\numberwithin{equation}{section}
\setlist[enumerate,1]{label={\upshape (\roman*)}}
\renewcommand{\int}[1]{\mathop{\mkern 0mu\mathrm{int}}\nolimits(#1)}
\definecolor{DarkDesaturatedBlue}{HTML}{3A3556}
\definecolor{VividOrange}{HTML}{F15918}
\definecolor{PureOrange}{HTML}{FFBA00}
\definecolor{LightGrayishPink}{HTML}{EEC5D5}
\definecolor{VerySoftBlue}{HTML}{B5AFDB}
\tikzset{snake it/.style={decorate, decoration=snake}}
\definecolor{DarkDesaturatedBlue}{HTML}{3A3556}
\definecolor{VividOrange}{HTML}{F15918}
\definecolor{PureOrange}{HTML}{FFBA00}
\definecolor{LightGrayishPink}{HTML}{EEC5D5}
\definecolor{VerySoftBlue}{HTML}{B5AFDB}
\begin{document}
\title{Spanning subdivisions in Dirac graphs}
\date{}
\author{Mat\'ias Pavez-Sign\'e\thanks{Supported by the European
Research Council (ERC) under the European Union Horizon 2020 research and innovation programme (grant agreement No. 947978). Mathematics Institute, Zeeman Building, University of Warwick, Coventry CV4 7AL, UK. Matias.pavez-signe@warwick.ac.uk}}

\maketitle
\begin{abstract}
    We show that for every $n\in\mathbb N$ and $\log n\le d< n$, if a graph $G$ has $N=\Theta(dn)$ vertices and minimum degree $(1+o(1))\frac{N}{2}$, then it contains a spanning subdivision of every $n$-vertex $d$-regular graph. \end{abstract}
\section{Introduction}
One of the central questions in extremal graph theory consists of determining degree conditions forcing the containment of rich substructures. One of the most notable examples here is Dirac's theorem~\cite{D1952} from 1952, which states that every graph on $n\ge 3$ vertices and minimum degree at least $\frac{n}{2}$ contains a Hamilton cycle. This result has been highly influential in extremal graph theory for more than 70 years, leading to a large collection of results usually referred to as Dirac-type results. Roughly speaking, the Dirac problem for a family of graphs $\mathcal F$ asks for the \textit{minimum degree threshold} which forces the containment of all members of $\mathcal F$.
\begin{definition}\label{definition:threshold}For an $n$-vertex host graph $G$ and a family $\mathcal F$ of graphs on at most $n$ vertices, the minimum degree threshold for containing $\mathcal F$ is
\[\delta(n,\mathcal F)=\min\{m:\delta(G)\ge m\mbox{ implies }F\subseteq G\mbox{ for every }F\in\mathcal F\}.\]
\end{definition}
Due to the emergence of sophisticated embedding techniques in the last few decades, the Dirac problem is nowadays well-understood for a large class of graphs. For instance, the family of $n$-vertex trees with maximum degree $o(n/\log n)$  has minimum degree threshold $(1+o(1))\frac{n}{2}$ (see~\cite{kathapurkar2022spanning,KSS1995}), as conjectured by Bollob\'as~\cite{bollobas2004extremal} in the late 70s. For $n$ divisible by $d$, the threshold for containing a $K_d$-factor is $(1-1/d)n$ as shown by Hajnal and Szemer\'edi~\cite{HS1970}. For a general graph $H$, the minimum degree threshold for the containment of a \textit{perfect $H$-tiling} was determined (up to a constant additive term) by K\"uhn and Osthus~\cite{kuhn2009minimum} (see~\cite{Komlos2000TilingTT,Shokoufandeh2003ProofOA} for the threshold of \textit{almost perfect tilings} and~\cite{hurley2022sufficient} for a recent result on \textit{mixed tilings}).  The threshold for powers of Hamilton cycles was determined by Koml\'os, S\'ark\"ozy, and Szemer\'edi~\cite{KSS1998,KSS1998b}, proving the celebrated P\'osa--Seymour conjecture for large graphs (see~\cite{seymour73} for example). A breakthrough in the area was the proof of the Bollob\'as--Koml\'os bandwidth conjecture~\cite{komlos_1999} by B\"ottcher, Schacht, and Taraz~\cite{BST2009}, which asymptotically establishes the minimum degree threshold for the class of bounded degree $n$-vertex graphs with bounded chromatic number and sublinear bandwidth. For a more extensive revision of Dirac-type problems, we recommend looking at the excellent surveys~\cite{kuhn2009embedding,simonovits2019embedding}.

In this paper, we investigate Dirac-type problems for subdivisions of graphs.  Given a graph $H$, say a graph $H'$ is an \textit{$H$-subdivision} if $H'$ is obtained from $H$ by replacing one or more edges from $H$ with vertex-disjoint paths. Very recently, Lee~\cite{lee2023perfect} determined the minimum degree threshold for the containment of a \textit{perfect $H$-subdivision tiling}, that is, a collection of vertex-disjoint subdivisions of $H$ covering every vertex in the host graph. In this article, we will continue this study by investigating the minimum degree threshold of certain spanning graph subdivisions.

Problems regarding degree conditions forcing the containment of subdivisions have been extensively studied since the 60s, when Mader~\cite{MADER1967} proved that graphs with large average degree contain clique subdivisions. Independently, Mader~\cite{MADER1967} and Erd\H os and Hajnal~\cite{erdos1964complete} conjectured that graphs with average degree $\Omega(k^2)$ contain a subdivision of the clique on $k$ vertices, which is optimal as the complete bipartite graph with parts of size $\frac{k^2}{16}$ cannot contain a subdivision of $K_k$, the complete graph on $k$ vertices. This conjecture was solved in the 90s by Bollob\'as and Thomason~\cite{BOLLOBAS1998883} and by Koml\'os and Szemer\'edi~\cite{KomlosSzemeredi_subdivisions}. Two variations of this conjecture have been proposed:
\begin{itemize}
    \item Thomassen's conjecture~\cite{Thomassen1984}: For each $k\in\mathbb N$, there exists some $d=d(k)$ such that every graph with average degree at least $d$ contains a \textit{balanced}\footnote{A \textit{balanced} subdivision of a graph $H$ is an $H$-subdivision where all the edges of $H$ are replaced with paths of the same length.} subdivision of $K_k$.
    \item Verstra\"ete's conjecture~\cite{verstraete_2002}: If $G$ is a graph with average degree at least $\Omega(k^2)$, then $G$ contains a pair of disjoint isomorphic subdivisions of $K_k$.
\end{itemize}
These two conjectures have been recently solved. Firstly, Thomassen's conjecture was solved by Liu and Montgomery~\cite{liu2023solution} in their recent solution of the Erd\H os--Hajnal odd cycle problem~\cite{erdoshajnal66}. Secondly, Luan, Tang, Wang, and Yang~\cite{luan2023balanced} and Gil Fern\'andez, Hyde, Liu, Pikhurko, and Wu~\cite{fernandez2023disjoint} showed that graphs with average degree $\Omega(k^2)$ contain two pairwise disjoint isomorphic balanced subdivisions of $K_k$, settling Verstra\"ete's conjecture and also giving optimal bounds for Thomassen's conjecture. Note that for dense graphs, the above-mentioned results imply that every $n$-vertex graph with $\Omega(n^2)$ edges contains a balanced subdivision of a clique on $\Omega(\sqrt{n})$ vertices. Answering an old question of Erd\H os~\cite{erdostop}, Alon, Krivelevich, and Sudakov~\cite{alon_krivelevich_sudakov_2003} (see also~\cite{fox2011dependent}) showed that every $n$-vertex graph with at least $\varepsilon n^2$ edges contains a \textit{$1$-subdivision}\footnote{For $\ell\in\mathbb N$, an \textit{$\ell$-subdivision} of a graph $H$ is an $H$-subdivision where each edge of $H$ is replaced with a path with exactly $\ell$ interior vertices.} of the clique on $\varepsilon\sqrt{n}/4$ vertices. In particular, if an $n$-vertex graph $G$ has minimum degree $\delta(G)\ge\frac{n}{2}$, then it has at least $\frac{1}{4}n^2$ edges and thus contains a $1$-subdivision of the clique of size $\sqrt{n}/16$. 

Our first result is that we can embed a spanning subdivision of a clique in graphs with minimum degree slightly above Dirac's condition. 
\begin{theorem}\label{theorem:main2}For every $\varepsilon>0$, there exists a positive constant $C_0$ such that for all $C\ge C_0$ and $n\ge 2$ the following holds. Let $G$ be a graph on $N=C(n-1)n$ vertices and minimum degree $\delta(G)\ge (1+\varepsilon)\frac{N}{2}$. Then, $G$ contains a spanning subdivision of $K_n$.
\end{theorem}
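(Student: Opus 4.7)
The plan is to use the absorption method. Fix small constants $0<\gamma\ll\eta\ll\varepsilon$ and let $C_0=C_0(\varepsilon)$ be large enough that the concentration of vertex-degrees into a random subset of size $\Theta(C_0)$ fails with probability at most $\gamma$.

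First I would pick branch vertices $v_1,\dots,v_n$ and a reservoir $W\subseteq V(G)\setminus\{v_1,\dots,v_n\}$ with $|W|=\eta N$ uniformly at random; by standard Chernoff bounds plus a union bound, with positive probability every vertex has roughly $(1+\varepsilon/2)|W|/2$ neighbors in $W$ and retains a Dirac-type condition in $V(G)\setminus W$. Then, for each pair $\{i,j\}\in\binom{[n]}{2}$, I would build a short $v_i$-$v_j$ path $A_{ij}$ with interior in $W$, pairwise vertex-disjoint, equipped with many Montgomery-style \emph{absorbing slots} (short subpaths that can be rerouted to swallow an external vertex). The system $\{A_{ij}\}$ should have the following \emph{absorbing property}: for any $L\subseteq V(G)\setminus(W\cup\{v_1,\dots,v_n\}\cup\bigcup_{ij}A_{ij})$ with $|L|\le\gamma N$, one can extend the $A_{ij}$'s to longer vertex-disjoint $v_i$-$v_j$ paths that jointly cover $W\cup L$. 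A Hall-type matching argument, in the dense bipartite graph between potential leftover vertices and absorbing slots, supplies this.

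Next, let $R=V(G)\setminus(W\cup\{v_1,\dots,v_n\})$ and equitably at random partition $R$ into sets $R_{ij}$, one per pair, of size $r\approx 2C(1-\eta)$. For each pair I would seek a Hamilton $v_i$-$v_j$ path in $G[R_{ij}\cup\{v_i,v_j\}]$. By Chernoff, each vertex's degree into $R_{ij}$ is within $\varepsilon r/4$ of its expectation $(1+\varepsilon/2)r/2$ except with probability $e^{-\Omega(\varepsilon^2 C)}\le\gamma$; hence the Chv\'atal-Erd\H os condition for Hamilton-connectedness between $v_i$ and $v_j$ in $G[R_{ij}\cup\{v_i,v_j\}]$ fails on at most a $2\gamma$-fraction of pairs (by Markov). ``Good'' pairs contribute their Hamilton paths, rerouted to also thread through the interior of $A_{ij}$, as the $v_i$-$v_j$ paths of the subdivision; the $R_{ij}$'s of ``bad'' pairs form a leftover set $L$ of size at most $2\gamma\binom{n}{2}\cdot 2C\le\gamma N$, which the absorbing property then folds into the extended $A_{ij}$'s of the bad pairs, completing a spanning $K_n$-subdivision.

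The main obstacle will be constructing the flexible absorber system $\{A_{ij}\}$. Each $A_{ij}$ must be short so as not to interfere with the random partition, yet collectively the absorbers must swallow up to $\gamma N$ leftover vertices that could be distributed arbitrarily among the $\binom{n}{2}$ paths, so every potential leftover vertex needs many compatible slots spread across the system. This balancing act -- many slots per absorber, few reservoir vertices per absorber, enough universal flexibility -- is where a Montgomery-style distributive absorber, adapted to the pairwise structure of $K_n$, becomes essential. A secondary subtlety is integrating the interior of each $A_{ij}$ into the good-pair Hamilton paths so that every vertex of $W$ ends up on some path; but since each $A_{ij}$ contributes only $o(r)$ extra vertices, the concentration estimates for the Hamilton-connectedness step still apply.
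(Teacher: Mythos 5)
Your approach is genuinely different from the paper's, and as written it has a real gap at its centre: the absorber system $\{A_{ij}\}$ is never constructed, and the properties you demand of it are in tension with one another. You want each $A_{ij}$ to be short, contributing only $o(r)$ vertices so as not to disturb the concentration estimates for the Hamilton paths in the $R_{ij}$, yet your absorbing property requires the extended paths to cover all of $W\cup L$, a set of size at least $\eta N$. Since each absorbing slot occupies at least two or three vertices of $W$ and swallows only one external vertex, a system of total size $o(N)$ supplies only $o(N)$ slots and cannot absorb the $\eta N-o(N)$ vertices of $W$ lying outside $\bigcup_{ij}A_{ij}$. To make the scheme consistent you must let the $A_{ij}$'s cover essentially all of $W$ from the start, so $|A_{ij}|=\Theta(\eta C)$ rather than $o(r)$; that is still tolerable for the concentration step, but then building $\binom{n}{2}$ vertex-disjoint paths with prescribed endpoints, prescribed slot structure, and jointly spanning $W$ is a problem of essentially the same difficulty as the one you set out to solve, and it is exactly the step you leave unresolved. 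Two secondary slips: Hamilton-connectedness between two prescribed vertices in a dense graph is Ore's condition $\delta\ge(|V|+1)/2$ (Lemma~\ref{lemma:hamilton}), not Chv\'atal--Erd\H os; and your Markov step only bounds the expected number of bad vertex--part incidences by $\gamma N=2\gamma C\binom{n}{2}$, so to get a $\gamma$-fraction of bad pairs you need the individual failure probability to be $o(1/C)$ rather than $\le\gamma$ --- true for large $C$, but it should be said.

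The paper shows that no absorption is needed at all, which is the main point of contrast. It randomly partitions $V(G)$ into $n$ sets $V_i$ of size $C(n-1)$, one per branch vertex (Lemma~\ref{lemma:partition:1}), fixes an edge $v_{i,j}v_{j,i}$ between $V_i$ and $V_j$ for each pair in advance, and then, by a dyadic sequence of random bisections (Lemma~\ref{lemma:partition:2}, whose success probability is only $e^{-2d}$ but positive), splits each $V_i$ \emph{exactly} into $n-1$ parts of size $C-1$ or $C-2$, every one of which inherits minimum degree $(1+\varepsilon/4)|V'_{i,j}|/2$ together with good degree from $v_i$ and $v_{i,j}$. Ore's theorem then yields a Hamilton $v_i,v_{i,j}$-path in every single part, so there are no bad pairs and no leftover: the union of these paths is already the spanning subdivision. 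If you want to salvage your plan you must actually build the distributive absorber --- which is essentially what Lee's later proof of Conjecture~\ref{conjecture:1} does --- but for this theorem the exact-partition argument is both shorter and stronger, since it also keeps all paths of nearly equal length.
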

The constant $\frac{1}{2}$ in the minimum degree condition in Theorem~\ref{theorem:main2} is essentially best possible, as shown by considering the graph consisting of two disjoint cliques of size $C\binom{n}{2}$. Moreover, Theorem~\ref{theorem:main2} is a direct consequence of the following more general result about subdivisions of $d$-regular graphs.
   
\begin{theorem}\label{theorem:main}For every $\varepsilon>0$, there exists a positive constant $C_0$ such that for all $C\ge C_0$ and $n\ge 2$ the following holds. Let $\log n\le d< n$ and let $G$ be a graph on $N=Cdn$ vertices and minimum degree $\delta(G)\ge (1+\varepsilon)\frac{N}{2}$. Then, $G$ contains a spanning subdivision of every $n$-vertex $d$-regular graph.
\end{theorem}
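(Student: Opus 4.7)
The strategy is the absorption method adapted to the multi-path structure of a spanning subdivision. Since $N=Cdn\gg n$, the $dn/2$ subdivision paths of $H$ have average interior length roughly $2C$, giving ample slack to tune individual path lengths. My plan is to (i) fix the branch vertices along with designated ``exit'' neighbours, (ii) set aside a small random reservoir in which one builds an absorbing path for every edge of $H$, (iii) fill the rest of $G$ with an almost-spanning subdivision whose edge-paths realise prescribed lengths, and (iv) use the absorbers to swallow the leftover vertices.

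In more detail: greedily select branch vertices $B=\{v_1,\dots,v_n\}$ so that every pair still has $(1+\Omega(\varepsilon))\tfrac{N}{2}$ common neighbours outside $B$, and for each $v_i$ reserve $d$ private exit neighbours $N_i^{\ast}\subset N(v_i)$, disjoint across $i$. Fix a bijection from $N_i^{\ast}$ to the edges of $H$ incident to $v_i$, so each exit is earmarked as the first interior vertex of a specific subdivision path. Randomly set aside a reservoir $R\subset V(G)\setminus(B\cup\bigcup_i N_i^{\ast})$ of size $\gamma N$ with $\gamma\ll\varepsilon$; by Chernoff every vertex retains $(1\pm o(1))\tfrac{|R|}{2}$ neighbours in $R$, so $G[R]$ inherits a Dirac-type minimum degree.

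For each edge $e=v_iv_j\in E(H)$, build inside $R$ a path $A_e$ between the two exits earmarked for $e$, equipped with a Montgomery-style absorbing gadget: there is a pool $F_e$ of ``absorbable'' vertices such that any $X\subseteq F_e$ with $|X|$ at most the capacity of $A_e$ can be inserted into $A_e$ without changing its endpoints. Arrange the $dn/2$ absorbers to be internally vertex-disjoint and of total absorbing capacity at least $2\eta N$. Next, equitably partition the remaining vertex set $V(G)\setminus(B\cup R\cup\bigcup_e V(A_e))$ at random into chunks $V_e$, one per edge of $H$, with $|V_e|$ matching the target interior length of the subdivision path for $e$ minus $|V(A_e)|-2$ and with at most $\eta N$ slack overall. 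Randomness preserves a Dirac-type degree condition from each $v_i$ into every incident chunk $V_e$. Process the edges in turn: for each $e=v_iv_j$, find a Hamilton path of $G[V_e\cup\{\text{the two exits for }e\}]$ between the two exits, using Hamilton connectivity of sufficiently dense graphs. Finally distribute the at most $\eta N$ leftover vertices across the $A_e$ and invoke the absorbing property to incorporate them.

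The hardest step is the joint construction of the $dn/2$ absorbers within a single small reservoir, each with prescribed distinct endpoints in the exit sets $N_i^{\ast}$ and together able to absorb any leftover set distributed among them. One must prevent different absorbers from competing for the same reservoir vertices or endpoint slots, and the aggregate absorption capacity must comfortably exceed the slack of the random partition in the embedding step. The flexibility from $C$ large (hence average path length $\approx 2C$) is exactly what makes this budget feasible, and explains why the theorem assumes $N=Cdn$ with $C\ge C_0$ rather than merely $N\gg n$.
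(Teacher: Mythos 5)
Your proposal heads down a genuinely different (and much heavier) route than the paper, and as written it has two real gaps. First, the step you yourself flag as hardest --- constructing $dn/2$ internally disjoint Montgomery-style absorbing paths inside a reservoir of size $\gamma N$, each with two prescribed endpoints in the exit sets, and with the additional property that an \emph{arbitrary} leftover set can be distributed among them so that each vertex lands in a gadget that can actually swallow it --- is not carried out, and it is not routine: a vertex can only be absorbed by a gadget in which it has suitable neighbours, so you need a covering/flexibility argument (a template or robustly distributed system of absorbers) guaranteeing that every vertex of $G$ is absorbable by many of the gadgets, on top of the disjointness and prescribed-endpoint constraints. Without that, ``invoke the absorbing property'' is a placeholder for the main difficulty; indeed the paper notes that Lee's later absorption-based proof of the stronger Conjecture~\ref{conjecture:1} pays for this by making one path much longer than the others, which your sketch does not account for.

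Second, your concentration claim for the equitable random partition into chunks $V_e$ is false as stated. The chunks have constant size $\Theta(C)$ while there are $N=Cdn$ vertices, so the failure probability for a single vertex keeping proportional degree into its own chunk is $e^{-\Theta(\varepsilon^2 C)}$, a constant, and the union bound over all vertices gives $N e^{-\Theta(\varepsilon^2 C)}\gg 1$. This is precisely the obstacle the paper's proof is organised around: it first partitions $V(G)$ into $n$ sets of size $Cd\ge C\log n$, where the union bound does close --- this is the only place the hypothesis $d\ge\log n$ is used, and your proposal never uses that hypothesis --- and then refines each such set into constant-size pieces by an iterated dyadic halving, settling for a \emph{positive} success probability (at least $e^{-2d}$) rather than a high one. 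Finally, note that no absorption is needed at all here: the paper partitions $V(G)$ exactly into sets $V_{i,j}$ of size $C$ or $C+1$ indexed by the half-edges of $H$, each containing the branch vertex $v_i$ and a pre-chosen connector $v_{i,j}$ with $v_{i,j}v_{j,i}\in E(G)$, and covers each $V_{i,j}$ entirely by a Hamilton $v_i,v_{i,j}$-path using Ore's theorem on Hamiltonian path-connectedness; concatenating the two half-paths for each edge of $H$ yields the spanning subdivision with nothing left over.
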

The proof of Theorem~\ref{theorem:main} is probabilistic in nature and avoids any use of the \textit{regularity lemma}. In particular, we can pick the constant $C_0$ in Theorem~\ref{theorem:main} to be of order $C_0=O(\varepsilon^{-4})$ instead of being of tower-type. Also, from the proof of Theorem~\ref{theorem:main}, we can embed a balanced subdivision covering all but a linear proportion of the vertices of the host graph. Thus, by randomly partitioning the vertex set into $k\in\mathbb N$ subsets of roughly the same size, one can find $k$ vertex-disjoint isomorphic balanced subdivisions covering all but a linear proportion of the vertices. 

\section{Proof}
\subsection{Notation}
 For a graph $G$, we let  $V(G)$ and $E(G)$ denote its vertex set and edge set, respectively, and we write $|G|=|V(G)|$. The neighbourhood of a vertex $x$, denoted $N(x)$, is the set of vertices that are adjacent to $x$, and we write $d(x)=|N(x)|$ for the degree of $x$. The minimum degree of $G$ is denoted $\delta(G)$. Also, for a vertex $x\in V(G)$ and a subset $U\subset V(G)$, we write $N(x,U)$ for the set of neighbours of $x$ in $U$ and let $d(x,U)=|N(x,U)|$ denote the degree of $x$ into $U$. Given a set $A\subset V(G)$, we denote by $G[A]$ the graph induced by $A$, and, for disjoint subsets $A,B\subset V(G)$, we let $G[A,B]$ denote the bipartite graph induced by $A$ and $B$. When working with more than one graph, we use subscripts to explicit which graph are we working with, for instance, $d_H(x)$ denotes the degree of a vertex $x$ in the graph $H$.

A path $P$ is an ordered sequence of distinct vertices $P=u_1\ldots u_{t+1}$ such that $u_iu_{i+1}$ is an edge for each $i\in[t]$, in which case we say that $P$ has length $t$, endpoints $u_1$ and $u_{t+1}$, and interior $u_2,\ldots, u_t$. For vertices $x$ and $y$, an $x,y$-path is a path with endpoints $x$ and $y$. For a path $P=v_1\ldots v_t$, we let $P^\triangleleft=v_t\ldots v_1$ denote the path $P$ traversed in reverse order. For an $H$-subdivision $H'$, we say that a vertex $v\in V(H')$ is a \textit{branch vertex} if it is the copy of some vertex from $H$. 

 As usual, $[n]$ denotes the set of the first $n$ positive integers. Also, we use standard hierarchy notation, that is, we write $a\ll b$ to denote that given $b$ one can choose $a$ sufficiently small so that all relevant statements hold.
\subsection{Probabilistic tools}
Let $N,m,n\in\mathbb N$ satisfy $m,n\le N$, let $J$ be a set of size $N$ and let $I\subset J$ be a subset of size $m$. If a subset $I'\subset J$ is chosen uniformly at random amongst all subsets of size $n$, then the random variable $X=|I\cap I'|$ is said to have a \textit{hypergeometric distribution} with parameters $N$, $n$, and $m$. For example, for a graph $G$ and a vertex $v\in V(G)$, if we pick a random subset $U\subset V(G)$ of size $\ell$, then $d(v,U)$ is a hypergeometric random variable with parameters $N'=|G|$, $n'=\ell$ and $m'=d(v)$. We will use the following standard concentration result for hypergeometric random variables. 

\begin{lemma}[Theorem 2.10 in~\cite{JLR2000}]\label{lemma:chernoff}Let $X$ be a hypergeometric random variable with parameters $n,m$, and $N$. Then, for every $t>0$, 
\[\mathbb P\left(\big|X-\mathbb E[X]\big|\ge t\right)\le 2e^{-2t^2/n}.\]
\end{lemma}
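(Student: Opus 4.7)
The plan is to reduce the hypergeometric tail bound to the binomial case and then apply the standard Chernoff method. First I would write $X=\xi_1+\cdots+\xi_n$, where $\xi_i\in\{0,1\}$ indicates whether the $i$-th element drawn (in some fixed ordering of the uniform random $n$-subset of $J$) belongs to $I$. By exchangeability of the draw, each $\xi_i$ is Bernoulli with parameter $p=m/N$, and $\mathbb E[X]=np$.

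The central ingredient is Hoeffding's classical inequality for sampling without replacement: for every convex $\varphi:\mathbb R\to\mathbb R$,
\[\mathbb E[\varphi(X)]\le \mathbb E[\varphi(Y)],\]
where $Y=\eta_1+\cdots+\eta_n$ is a sum of i.i.d.\ Bernoulli$(p)$ random variables. I would derive this by showing that the hypergeometric measure is dominated by the binomial in the convex order; one clean route is a coupling induction, successively replacing each without-replacement draw by an independent Bernoulli. The heuristic is that sampling without replacement introduces negative correlations between draws, and hence concentrates sums at least as much as independent sampling does.

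With that reduction in hand, the desired bound follows from the exponential moment method. For $\lambda>0$,
\[\mathbb P\bigl(X-\mathbb E[X]\ge t\bigr)\le e^{-\lambda t}\,\mathbb E\bigl[e^{\lambda(X-\mathbb E[X])}\bigr]\le e^{-\lambda t}\,\mathbb E\bigl[e^{\lambda(Y-\mathbb E[Y])}\bigr],\]
and the Hoeffding MGF bound $\mathbb E[e^{\lambda(\eta_i-p)}]\le e^{\lambda^2/8}$ for centred $[0,1]$-valued variables, together with independence of the $\eta_i$, yields $e^{-\lambda t+n\lambda^2/8}$. Optimising at $\lambda=4t/n$ gives $e^{-2t^2/n}$; the lower tail is symmetric (apply the same argument with $\xi_i$ replaced by $1-\xi_i$), and a union bound supplies the factor of $2$.

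The hardest part is the convex-order reduction; everything else is routine one-variable calculus. In the present paper the result is invoked directly from \cite{JLR2000}, where this is exactly Theorem~2.10, so no additional work is required beyond the classical Hoeffding inequality applied to $[0,1]$-valued summands with $\sum_i(b_i-a_i)^2=n$.
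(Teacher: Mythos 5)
Your proof is correct. Note, however, that the paper does not prove this lemma at all: it is imported verbatim as Theorem~2.10 of~\cite{JLR2000}, so there is no in-paper argument to compare against. What you have written is the standard derivation behind that cited result --- Hoeffding's convex-order domination of sampling without replacement by i.i.d.\ Bernoulli sampling, followed by the exponential moment method with the bound $\mathbb E[e^{\lambda(\eta_i-p)}]\le e^{\lambda^2/8}$ and the optimisation $\lambda=4t/n$ --- and all the steps check out, including the factor of $2$ from the two tails. The only part that genuinely requires care is the convex-order (or coupling) step, which you correctly identify as the crux; everything downstream is routine.
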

\begin{definition}Let $H$ be a graph with vertex set $V(H)=[t]$. An $(H,\alpha)$-good partition of a graph $G$ is a vertex partition $V(G)=V_1\cup\ldots\cup V_t$ which satisfies the following properties.
\begin{enumerate}
    \item $m=|V_1|=\dots=|V_t|$.
    \item$\delta(G[V_i])\ge \alpha m$ for all $i\in [t]$.
    \item $\delta(G[V_i,V_j])\ge \alpha m$ for all $i,j\in [t]$ such that $ij\in E(H)$.
\end{enumerate}
\end{definition}
We now show that a randomly chosen partition is likely to be good when the host graph has linear minimum degree.
\begin{lemma}\label{lemma:partition:1}
    Let $1/C\ll \delta<\alpha$ and $n\ge 2$. Suppose $\log n\le d< n$ and that $G$ is a graph on $N=Cdn$ vertices and minimum degree $\delta(G)\ge \alpha N$. Then,  for every $d$-regular graph $H$ on $n$ vertices, $G$ admits an $(H,\alpha-\delta)$-good partition.
    \end{lemma}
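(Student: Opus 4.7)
The plan is to take a uniformly random balanced partition $V(G) = V_1 \cup \dots \cup V_n$ with $|V_i| = m := Cd$ for every $i$, and to show via concentration plus a union bound that with positive probability this partition satisfies all three conditions of an $(H,\alpha-\delta)$-good partition. The size condition is automatic, so the work is entirely in verifying the two minimum-degree bounds.

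For this I would fix a vertex $v \in V(G)$ and an index $j \in [n]$, and analyse $X_{v,j} := |N(v) \cap V_j|$. Conditioning on whether $v \in V_j$, the variable $X_{v,j}$ is hypergeometric with sample size at most $m$ drawn from a population of $N-1$ of which $d(v) \geq \alpha N$ are marked, so its conditional expectation is at least
\[
\frac{d(v)(m-1)}{N-1} \;\geq\; \alpha(m-1) \;\geq\; \alpha m - 1.
\]
Since $1/C \ll \delta$ and $m = Cd$, this is at least $(\alpha - \delta/2)m$. Applying \cref{lemma:chernoff} with $t = (\delta/2)m$ then gives
\[
\mathbb P\bigl(X_{v,j} \leq (\alpha-\delta) m\bigr) \;\leq\; 2 \exp\!\left(-\delta^2 m / 2\right) \;\leq\; 2 n^{-\delta^2 C / 2},
\]
where the last step uses the crucial hypothesis $d \geq \log n$, which gives $m \geq C \log n$.

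Finally I would take a union bound over the bad events: for each of the $N$ vertices $v$, letting $i$ be its (random) part, we need $X_{v,i} \geq (\alpha-\delta)m$ for property (ii) and $X_{v,j} \geq (\alpha-\delta)m$ for each of the $d$ indices $j$ with $ij \in E(H)$ for property (iii). This is $d+1$ events per vertex, so at most $N(d+1) \leq 2Cd^2 n \leq 2Cn^3$ events overall, using $d<n$. The total failure probability is thus bounded by $4Cn^3 \cdot n^{-\delta^2 C/2}$, which is less than $1$ once $C$ is large enough in terms of $\delta$, exactly the regime $1/C \ll \delta$ in the hypothesis.

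The main point is bookkeeping rather than a conceptual obstacle: the key balance is that the hypothesis $d \geq \log n$ is precisely strong enough to turn the per-event tail $\exp(-\Omega(\delta^2 m))$ into $n^{-\Omega(\delta^2 C)}$, which beats the $\mathrm{poly}(n)$ loss of the union bound. The only slightly delicate checks are that the conditional hypergeometric expectation is truly $\alpha m - O(1)$ (independently of whether $v$ lies in $V_j$), and that $d$-regularity of $H$ is what bounds the number of events per vertex by $d+1$.
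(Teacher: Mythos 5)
Your proposal is correct and follows essentially the same route as the paper: a uniformly random balanced partition, the hypergeometric tail bound of \cref{lemma:chernoff} applied to each $d(v,V_j)$, and a union bound over $O(Nd)$ events that is beaten because $d\ge\log n$ makes each tail probability $n^{-\Omega(\delta^2 C)}$. Your conditioning on whether $v\in V_j$ is a slightly more careful treatment of the expectation than the paper gives, but it does not change the argument.
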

\begin{proof}Let $V(G)=V_1\cup\ldots\cup V_n$ be a random partition of $V(G)$ into sets of size $Cd$. Then, for any vertex $v\in V(G)$ and index $i\in [n]$, $d(v,V_i)$ follows a hypergeometric distribution with parameters $N'=N$, $n'=|V_i|$ and $m'=d(v)$, and has expectation
\[\mathbb E[d(v,V_i)]=d(v)\cdot \frac{|V_i|}{N}\ge \alpha Cd.\]
Then, from Lemma~\ref{lemma:chernoff}, we have
\[\mathbb P(d(v,V_i)\le (\alpha-\delta)Cd)\le 2e^{-2\delta^2Cd}.\]
Therefore, using the union bound and that $H$ is $d$-regular, we have
\[\mathbb P((V_1,\ldots,V_n)\mbox{ is not }(H,\alpha-\delta)\mbox{-good})\le 2 |G|\cdot d\cdot 4e^{-2\delta^2Cd}\le 2Cn^3e^{-2\delta^2C\log n}<1.\]
\end{proof}

\begin{lemma}\label{lemma:partition:2}Let $1/C\ll\delta<\alpha$ and $d\in\mathbb N$, and suppose $G$ is a graph on $N=Cd$ vertices and minimum degree $\delta(G)\ge \alpha N$. Then, for every set of distinct vertices $v_0,v_1,\ldots, v_d\in V(G)$, there is a partition $V(G)\setminus\{v_0,v_1,\ldots, v_d\}=V_1\cup\ldots\cup V_d$ such that 
\begin{enumerate}
 \item\label{partition:degree:1} $|V_1|=\dots=|V_{d-1}|=C-1$ and $|V_d|=C-2$, 
 \item\label{partition:degree:2} $d(v_0,V_i)\ge (\alpha-\delta)|V_i|$ and $d(v_i,V_i)\ge (\alpha-\delta)|V_i|$ for all $i\in [n]$, and 
 \item\label{partition:degree:3} $\delta(G[V_i])\ge (\alpha-\delta)C$ for all $i\in [n]$.
\end{enumerate}
\end{lemma}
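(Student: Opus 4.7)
The plan is to use the probabilistic method, mirroring Lemma~\ref{lemma:partition:1}. Let $W = V(G)\setminus\{v_0,v_1,\ldots,v_d\}$ and note $|W|=Cd-(d+1)=(d-1)(C-1)+(C-2)$, so a partition of $W$ into $V_1,\ldots,V_d$ with the sizes required by (i) is feasible. I take such a partition uniformly at random and show that the bad events for (ii) and (iii) jointly fail with positive probability.

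For (ii), fix $v\in\{v_0,v_1,\ldots,v_d\}$ and $i\in[d]$. Since $V_i$ is a uniform random subset of $W$ of size $|V_i|$, the variable $d(v,V_i)$ is hypergeometric with parameters $|W|,|V_i|,d(v,W)$. Using $d(v,W)\ge \alpha N-(d+1)$ and $|W|=N-(d+1)$, for $C$ sufficiently large the mean is at least $(\alpha-\delta/2)|V_i|$, so Lemma~\ref{lemma:chernoff} bounds $\mathbb P[d(v,V_i)<(\alpha-\delta)|V_i|]$ by $2\exp(-\Omega(\delta^2 C))$.

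For (iii), fix $v\in W$ and condition on the index $j$ with $v\in V_j$. The remaining elements of $V_j$ then form a uniform random subset of $W\setminus\{v\}$ of size $|V_j|-1$, so $d(v,V_j)$ is again hypergeometric, now with parameters $|W|-1,|V_j|-1,d(v,W\setminus\{v\})$; its mean is at least $(\alpha-\delta/2)|V_j|$, and Lemma~\ref{lemma:chernoff} gives $\mathbb P[d(v,V_j)<(\alpha-\delta)C\mid v\in V_j]\le 2\exp(-\Omega(\delta^2 C))$. Union bounding over the $O(d^2)$ bad events coming from (ii) and the $|W|\le Cd$ bad events coming from (iii), the total failure probability is at most $O(Cd)\cdot\exp(-\Omega(\delta^2 C))$, which is less than $1$ once $C$ is large enough, so the desired partition exists.

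The main subtlety is condition (iii), where the placement of $v$ and the contents of its host part are coupled. This is resolved by conditioning on the part $V_j$ containing $v$: the remainder of $V_j$ then becomes a uniform random subset of $W\setminus\{v\}$ of the correct size, restoring the hypergeometric structure and allowing the same Chernoff-plus-union-bound argument used for (ii) to apply uniformly to all $v\in W$.
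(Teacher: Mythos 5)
Your proposal has a genuine gap: the final union bound does not close. In this lemma the parts have size $|V_i|\approx C$, where $C$ is a constant chosen in the hierarchy $1/C\ll\delta<\alpha$ \emph{before} $d$, which is an arbitrary natural number (and in the application to Theorem~\ref{theorem:main}, $d$ can be as large as $n$ while $C=O(\varepsilon^{-4})$). Lemma~\ref{lemma:chernoff} applied to a hypergeometric variable with sample size $|V_i|\approx C$ and deviation $t=\Theta(\delta C)$ gives a failure probability of order $e^{-\Omega(\delta^2C)}$ \emph{per event}; this is a fixed constant that does not decay as $d$ grows. Summing over the $\Theta(Cd)$ events needed for condition (iii) (one per vertex of $W$) yields a total failure probability of order $Cd\cdot e^{-\Omega(\delta^2C)}$, which exceeds $1$ as soon as $d\gtrsim e^{\Omega(\delta^2 C)}$. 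Your claim that this is ``less than $1$ once $C$ is large enough'' silently makes $C$ depend on $d$, which the statement does not permit. Note the contrast with Lemma~\ref{lemma:partition:1}, where the parts have size $Cd\ge C\log n$, so the exponent in the tail bound scales with $d\ge\log n$ and does beat the polynomial union bound; that is precisely why the simple one-shot argument works there but not here. (Your handling of the conditioning subtlety in (iii) is fine; that is not where the problem lies.)

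The paper circumvents this by abandoning the union bound in favour of a product of conditional probabilities. It builds the partition by repeatedly halving: over $s\approx\log_2 d$ rounds, each current block is split uniformly at random into two halves of prescribed sizes, and one tracks the event $\mathbf{Y}_i$ that all blocks at level $i$ retain minimum degree $(\alpha-2|V_{i,j}|^{-1/4})|V_{i,j}|$ (with the error term degrading gracefully as blocks shrink). Conditioned on level $i-1$ succeeding, the splits of distinct blocks are mutually independent, so the success probabilities multiply rather than add up as losses: $\mathbb P(\mathbf Y_i\mid\mathbf Y_{i-1})\ge\exp(-2^{i-1})$, whence $\mathbb P(\mathbf Y_s)\ge e^{-2d}>0$. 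The point is that a positive, even exponentially small, success probability suffices for existence, whereas your additive accounting requires the per-event failure probability to beat $1/(Cd)$, which it cannot for constant $C$. To repair your argument you would need either this iterative scheme or some other tool (e.g.\ a local-lemma-type argument exploiting the limited dependence between parts), not a plain union bound.
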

\begin{proof}Set $m_1=\dots=m_{d-1}=C-1$ and $m_d=C-2$. We start by dividing $[d]$ into a collection of subintervals that we will use as a guide for constructing our partition. Let $I_{0,1}=[d]$ and let $s\ge 0$ be minimal integer such that $2^{s-1}< d\le 2^{s}$. Iteratively, for $0\le i<s$ and $1\le j\le 2^{i}$, we partition $I_{i,j}$ into intervals $I_{i+1,2j-1}$ and $I_{i+1,2j}$ such that $\big||I_{i+1,2j-1}|-|I_{i+1,2j}|\big|\le 1$.
Note that at each step $I_{i,j}$ is divided into two subintervals of length roughly $|I_{i,j}|/2$ and thus, after $i$ steps, $[d]$ is divided into $2^{i}$ subintervals of length approximately $d/2^{i}$. Furthermore, as $\big||I_{i+1,2j-1}|-|I_{i+1,2j}|\big|\le 1$ for all $0\le i<s$ and $1\le j\le 2^i$, by definition of $s$ each $I_{s-1,\ell}$, for $\ell\in [2^{s-1}]$, is either a singleton or contains exactly $2$ elements.

Set $V_{0,1}=V(G)\setminus\{v_0,v_1,\ldots, v_d\}$. Iteratively, for $0\le i<s$ and $1\le j\le 2^i$, partition $V_{i,j}$ uniformly at random into sets $V_{i+1,2j-1}$ and $V_{i+1,2j}$ such that 
\[|V_{i+1,2j-1}|=\sum_{\ell\in I_{i+1,2j-1}}m_\ell\quad \text{and}\quad |V_{i+1,2j}|=\sum_{\ell'\in I_{i+1,2j}}m_{\ell'}.\]
This is possible as $I_{i,j}=I_{i+1,2j-1}\cup I_{i+1,2j}$ and $|V_{i,j}|=\sum_{\ell\in I_{i,j}}m_\ell$. Let $J\subset [2^{s-1}]$ be the set of those indices $j\in [2^{s-1}]$ such that $I_{s-1,j}$ is a singleton. Let $t=|J|$ and note that $0\le t<2^{s-1}$, as $2^{s-1}<d\le 2^s$. After relabelling the indices, we may assume that $[2^{s-1}]\setminus J=[2^{s-1}-t]$. At step $s$, for each $1\le j\le 2^{s-1}-t$ we partition $V_{s-1,j}$ uniformly at random into sets $V_{s,2j-1}$ and $V_{s,2j}$ such that $\big||V_{s,2j-1}|-|V_{s,2j}|\big|\le 1$, and, noting that $d=2^s-t$, for each $2^{s-1}-t<j'\le 2^{s-1}$ we set $V_{s,2^{s-1}-t+j'}=V_{s-1,j'}$. For $0\le i<s$ and $1\le j\le 2^i$, let $\mathbf{Y}_{i,j}$ denote the event where the following three properties hold: 
\begin{enumerate}[label = \upshape\textbf{A\arabic{enumi}}]
\item\label{degree:1} $\delta(G[V_{i,j}])\ge (\alpha-2|V_{i,j}|^{-1/4})|V_{i,j}|$. 
    \item\label{degree:2} $d(v_0,V_{i,j})\ge (\alpha-2|V_{i,j}|^{-1/4})|V_{i,j}|$.
    \item\label{degree:3} $d(v_\ell, V_{i,j})\ge (\alpha-2|V_{i,j}|^{-1/4})|V_{i,j}|$ for all $\ell\in I_{i,j}$.
\end{enumerate}
For $j\in [d]$, let $\mathbf{Y}_{s,j}$ denote the event where \ref{degree:1}, \ref{degree:2} and the following hold: 
\begin{enumerate} [label=\upshape\textbf{A4}]
    \item $d(v_j,V_{s,j})\ge (\alpha-2|V_{s,j}|^{-1/4})|V_{s,j}|$.\label{degree:4}
\end{enumerate}
For $0\le i\le s$, let $\mathbf{Y}_{i}$ be the event where $\mathbf{Y}_{i,j}$ holds for all $1\le j\le \min\{2^{i},d\}$. 
\begin{claim}\label{claim:chernoff}For all $1\le i\le s$, $\mathbb P(\mathbf{Y}_i\mid \mathbf{Y}_{i-1})\ge \exp(-2^{i-1})$.    
\end{claim}
Before proving Claim~\ref{claim:chernoff}, let us show how to conclude the proof. We observe that if $\mathbf{Y}_s$ holds, then we would have found the required partition. Indeed, after a relabelling of the indices, each set $V_{s,j}$ has size exactly $m_j$ and \ref{degree:1}--\ref{degree:4} imply that \ref{partition:degree:1}--\ref{partition:degree:3} hold, as $1/m_j\ll \delta$.  Therefore, it is enough to show that $\mathbf{Y}_s$ occurs with positive probability. Noting
that $\mathbf{Y}_0$ holds with probability 1, as $\delta(G)\ge \alpha N$, from Claim~\ref{claim:chernoff} we have 
\[\mathbb P(\mathbf{Y}_s)\ge \prod_{i=1}^s\mathbb P(\mathbf Y_i\mid \mathbf{Y}_{i-1})\ge \exp\left(-\sum_{i=1}^s2^{i-1}\right)\ge e^{-2d},\]
which finishes the proof. It is left only to prove Claim~\ref{claim:chernoff}.

\begin{proof}[Proof of Claim~\ref{claim:chernoff}]Let $1\le i<s$ and suppose that we have chosen our partition $V(G)=\bigcup_{j\in[2^{i-1}]}V_{i-1,j}$ for which $\mathbf{Y}_{i-1}$ holds. In what follows, all probabilities and expectations are conditioned on $\mathbf{Y}_{i-1}$. Let $1\le j\le 2^{i-1}$. For a vertex $v\in V_{i-1,j}$, observe that $d(v,V_{i,2j})$ follows a hypergeometric distribution with parameters $N'=|V_{i-1,j}|$, $n'=|V_{i,2j}|$ and $m'=d(v,V_{i-1,j})$. Then, assuming $\mathbf{Y}_{i-1}$ and hence $\mathbf{Y}_{i-1,j}$, from~\ref{degree:1} we deduce that
\[\mathbb E[d(v,V_{i,2j})]\ge (\alpha-2|V_{i-1,j}|^{-1/4})|V_{i,2j}|\]
and thus, from Lemma~\ref{lemma:chernoff}, 
\begin{equation}\label{chernoff:main:lemma}\mathbb P\left(d(v,V_{i,2j})\le (\alpha -2|V_{i-1,j}|^{-1/4})|V_{i,2j}|-|V_{i,2j}|^{2/3}\right)\le 2 \exp(-2|V_{i,2j}|^{1/3}).\end{equation}
On the other hand, using that $|V_{i,2j}|\le \frac{11}{20}|V_{i-1,j}|$, we have
    \begin{equation}\label{eq:main:lemma}2|V_{i-1,j}|^{-1/4}\cdot |V_{i,2j}|+|V_{i,2j}|^{2/3}\le
  2\cdot \left(\tfrac{11}{20}\right)^{1/4}|V_{i,2j}|^{3/4}+\tfrac{1}{10}|V_{i,2j}|^{3/4}\le 2|V_{i,2j}|^{3/4},\end{equation}
where we have used that $|V_{i,2j}|\ge C-2$ and $1/C\ll 1$. Combining \eqref{chernoff:main:lemma} and \eqref{eq:main:lemma}, we finally have
\[\mathbb P\left(d(v,V_{i,2j})\le (\alpha-2|V_{i,2j}|^{-1/4})|V_{i,2j}|\right)\le 2\exp(-2|V_{i,2j}|^{1/3}).
\]
Using the union bound over all $v\in V_{i-1,j}$, it follows that~\ref{degree:1} holds for $i$ and $2j$ with probability at least 
\begin{equation}\label{chernoff:2main:lemma}1-2|V_{i-1,j}|\cdot\exp(-2|V_{i,2j}|^{1/3})\ge 1-2|V_{i-1,j}|\cdot\exp(-|V_{i-1,j}|^{1/3}),\end{equation}
where we have used that $2|V_{i,2j}|^{1/3}\ge 2\cdot \left(\tfrac{9}{20}\right)^{1/3}|V_{i-1,j}|^{1/3}\ge|V_{i-1,j}|^{1/3}$. Similar calculations show that \ref{degree:2} and \ref{degree:3} hold with probability at least~\eqref{chernoff:2main:lemma}, and clearly the same conclusions hold for $2j-1$. Therefore, the probability that $\mathbf{Y}_{i,2j-1}$ and $\mathbf{Y}_{i,2j}$ hold, given a partition $V(G)=\bigcup_{j\in [2^{i-1}]}V_{i-1,j}$ for which $\mathbf{Y}_{i-1,j}$ holds, is at least 
\begin{equation}\label{prob:1}1-12|V_{i-1,j}|\cdot\exp(-|V_{i-1,j}|^{1/3}).\end{equation}
Finally, using that the events $(\mathbf{Y}_{i,2j-1}\wedge \mathbf{Y}_{i,2j})_{1\le j\le 2^{i-1}}$ are mutually independent given a partition $V(G)=V_{i-1,1}\cup\ldots\cup V_{i-1,2^{i-1}}$ for which $\mathbf{Y}_{i-1}$ holds, we can compute
\begin{eqnarray*}\mathbb P(\mathbf{Y}_i\mid \mathbf{Y}_{i-1})\overset{\eqref{prob:1}}{\ge} \prod_{j=1}^{2^{i-1}}(1-12 |V_{i-1,j}|\cdot \exp(-|V_{i-1,j}|^{1/3})&\ge& \prod_{j=1}^{2^{i-1}}\exp(-24|V_{i-1,j}|\cdot\exp(-|V_{i-1,j}|^{1/3}))\\
&\ge & \exp(-2^{i-1}),\end{eqnarray*}
where we have used that $e^{-2x}\le 1-x$ holds for all $0<x\le \frac{1}{2}$ and that $|V_{i-1,j}|\cdot\exp(-|V_{i-1,j}|^{1/3})\ll 1$ as $|V_{i-1,j}|\ge C-2$ and $1/C\ll 1$. Similar arguments show that $\mathbb{P}(\mathbf{Y}_s\mid \mathbf{Y}_{s-1})\ge \exp(-2^{s-1})$. 

\end{proof}
\end{proof}
\subsection{Proof of Theorem~\ref{theorem:main}}
 We say that a graph $G$ is \textit{Hamiltonian path-connected} if for every pair of distinct vertices $x,y\in V(G)$ there is a Hamiltonian $x,y$-path in $G$. The last ingredient that we need is the following classical result due to Ore~\cite{ore1963hamilton}.
\begin{lemma}\label{lemma:hamilton}Every graph $G$ on $n\ge 4$ vertices and minimum degree $\delta(G)\ge \frac{n+1}{2}$ is Hamiltonian path-connected.
\end{lemma}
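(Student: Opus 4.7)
The plan is to reduce Hamilton path-connectedness to Hamiltonicity of a one-vertex augmentation. Given any distinct $x, y \in V(G)$, I would form the auxiliary graph $G'$ on $n+1$ vertices obtained from $G$ by adjoining a single new vertex $z$ whose only neighbors are $x$ and $y$. Any Hamilton cycle $C$ of $G'$ is then forced to visit $z$ via its only two edges $zx$ and $zy$, so deleting $z$ from $C$ immediately produces a Hamilton $x,y$-path in $G$. The task therefore reduces to proving that $G'$ is Hamiltonian.

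The degrees in $G'$ are $d_{G'}(z) = 2$, $d_{G'}(v) \geq (n+1)/2$ for $v \in V(G) \setminus \{x, y\}$, and $d_{G'}(x), d_{G'}(y) \geq (n+3)/2$. Neither Dirac's nor Ore's minimum-degree condition applies to $G'$ because of the single low-degree vertex $z$, so I would invoke Chv\'atal's degree-sequence theorem, which is insensitive to one such outlier. Writing the sorted degree sequence of $G'$ as $d_1 \leq \cdots \leq d_{n+1}$ gives $d_1 = 2$ and $d_k \geq (n+1)/2$ for every $k \geq 2$; hence for $k = 1$ one has $d_1 = 2 > 1$ (using $n \geq 4$), while for every $2 \leq k < (n+1)/2$ one has $d_k \geq (n+1)/2 > k$. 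Chv\'atal's condition is thereby satisfied vacuously, and $G'$ admits a Hamilton cycle.

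If one prefers a self-contained argument, Hamiltonicity of $G'$ can instead be established by a direct P\'osa rotation-extension argument. Take a longest path $Q$ in $G'$; using the fact that every vertex other than $z$ still has at least $(n+1)/2$ neighbors in $G$, run the standard rotation sweep on both endpoints of $Q$ to generate a large family of paths on the same vertex set. The abundance of achievable endpoints forces a cycle spanning $V(Q)$; if this cycle is not already spanning in $G'$, then a vertex outside $V(Q)$ has enough neighbors inside $V(Q)$ (by the minimum-degree hypothesis) to be absorbed, contradicting maximality.

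The main obstacle in either implementation is establishing the Hamiltonicity of $G'$ despite the unavoidable low degree at $z$: no one-parameter minimum-degree theorem applies directly, and one must exploit the asymmetry that, apart from $z$, all degrees sit well above $(n+1)/2$. Once a Hamilton cycle in $G'$ is in hand, the conversion back to the desired Hamilton $x,y$-path in $G$ is a single line.
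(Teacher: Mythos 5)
Your proof is correct, and it is genuinely different from what the paper does: the paper gives no proof at all, simply citing the lemma as a classical theorem of Ore (1963) on Hamiltonian-connected graphs. Your reduction — adjoin a vertex $z$ with $N(z)=\{x,y\}$, show the augmented graph $G'$ is Hamiltonian, and peel $z$ off the cycle — is a standard and clean way to recover Hamiltonian path-connectedness from a Hamiltonicity criterion, and your verification of Chv\'atal's condition checks out: the sorted degree sequence of $G'$ on $n+1$ vertices has $d_1=2>1$ and $d_k\ge\frac{n+1}{2}>k$ for all $2\le k<\frac{n+1}{2}$ (using $n\ge 4$), so the implication $d_k\le k\Rightarrow d_{n+1-k}\ge n+1-k$ holds vacuously for every relevant $k$, and $G'$ has a Hamilton cycle that is forced through the edges $zx$ and $zy$. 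You are also right that neither Dirac's nor Ore's closure-free degree-sum condition applies directly to $G'$ because of the pair $(z,v)$ with $v\notin\{x,y\}$, so Chv\'atal's theorem (or some rotation--extension work) really is needed at that point. What your route buys is a short, self-contained derivation from a single well-known Hamiltonicity criterion; what the paper's route buys is brevity, since Ore's 1963 theorem (every graph with $d(u)+d(v)\ge n+1$ for all non-adjacent $u,v$ is Hamiltonian-connected) gives the statement immediately. The P\'osa rotation sketch in your third paragraph is not needed given that the Chv\'atal argument is complete, and as written it is too vague to stand on its own, so I would either drop it or flesh it out.
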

\begin{proof}[Proofof Theorem~\ref{theorem:main}]We start by picking a constant $1/C_0\ll\varepsilon$ and let $C\ge C_0$ and $n\ge 2$. Let $\log n\le d<n$ and let $H$ be a $d$-regular graph with vertex set $V(H)=[n]$. Given a graph $G$ with $N=Cdn$ vertices and minimum degree $\delta(G)\ge (1+\varepsilon)N/2$, we now find a partition of $V(G)$ which will be used as a template to embed the $H$-subdivision. 
\begin{claim}\label{claim:partition}
	There are distinct vertices $v_1,\ldots, v_n$ and pairs $(V_{i,j},v_{i,j})$, $i\in [n]$ and $j\in N_H(i)$, where $V_{i,j}\subset V(G)$ is a subset and $v_i,v_{i,j}\in V_{i,j}$, such that the following properties hold. 
\end{claim}
\begin{enumerate}[label = \upshape\textbf{B\arabic{enumi}}]
	\item\label{B:1} $V(G)=\bigcup_{i\in [n], j\in N_H(i)}V_{i,j}$.
 \item\label{B:2} $V_{i,j}$ and $V_{k,\ell}$ are disjoint for all $i\not= k$ and $j\in N_H(i), \ell\in N_H(k)$.
 \item\label{B:3} $V_{i,j}\cap V_{i,k}=\{v_i\}$ for every $i\in [n]$ and distinct $j,k\in N_H(i)$.
\item\label{B:4} $|V_{i,j}|\in\{C,C+1\}$ for all $i\in [n]$ and $j\in N_H(i)$.
    \item\label{B:5} $\delta(G[V_{i,j}])\ge\frac{|V_{i,j}|+1}{2}$ for all $i\in [n]$ and $j\in N_H(i)$.
    \item\label{B:6} For every $i\in [n]$ and $j\in N_H(i)$, $v_{ij}v_{ji}\in E(G)$.
\end{enumerate}
For now, let us assume Claim~\ref{claim:partition} in order to show Theorem~\ref{theorem:main}. Firstly, we use the vertices $(v_i)_{i\in [n]}$ as the branch vertices of the $H$-subdivision. Secondly, for each $i\in [n]$ and $j\in N_H(i)$, find a Hamilton $v_i,v_{i,j}$-path $P_{i,j}$ inside $V_{i,j}$. This is possible since \ref{B:5} and Lemma~\ref{lemma:hamilton} imply that $G[V_{i,j}]$ is Hamiltonian path-connected. We claim that this gives the spanning $H$-subdivision. Indeed, for all $ij\in E(H)$, \ref{B:6} implies that $Q_{ij}=P_{i,j}P^\triangleleft_{j,i}$ is a $v_iv_j$-path (see Figure~\ref{fig:connectivity}) which, because of~\ref{B:2} and~\ref{B:3}, is internally disjoint from every other possible path $Q_{k\ell}$. This gives an $H$-subdivision $F\subset G$. Finally, \ref{B:1} clearly implies that $F$ uses all vertices from $G$. 

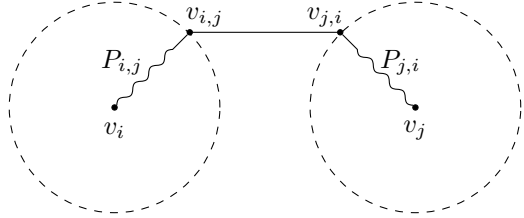
\begin{figure}[ht]
    \centering
      \begin{tikzpicture}[scale=1]

\draw node at (-.8,2.2) {$v_{i,j}$};
\draw node at (-2,.7) {$v_i$};
\draw node at (2,.7) {$v_j$};
\draw node at (.8,2.2) {$v_{j,i}$};
\draw node at (-1.9,1.6) {$P_{i,j}$};
\draw node at (1.8,1.6) {$P_{j,i}$};

\draw[dashed] (-2,1) circle [radius=1.4];
\draw[dashed] (2,1) circle [radius=1.4];

\tikzstyle{every node}=[circle, draw, fill, inner sep=0pt, minimum width=2pt]
\draw node (a1) at (-1,2) {};
\draw node (b4) at (1,2) {};
\draw node (x) at (-2,1) {};
\draw node (y) at (2,1) {};

\draw (a1) to (b4);
\draw[snake=coil,segment aspect=0, segment amplitude=.4mm,segment length=3mm] (x) -- (a1);
\draw[snake=coil,segment aspect=0, segment amplitude=.4mm,segment length=3mm] (y) -- (b4);

\end{tikzpicture}
 \caption{Creating the $v_i,v_j$-path $Q_{ij}$.}
 \label{fig:connectivity}
\end{figure}

\begin{proof}[Proof of  Claim~\ref{claim:partition}]
	Use Lemma~\ref{lemma:partition:1}, with parameter $\delta=\varepsilon/2$, to find an $(H,\frac{1}{2}+\frac{\varepsilon}{2})$-good partition $V(G)=\bigcup_{i\in [n]}V_{i}$ such that 
	\begin{enumerate}
		\item$|V_{i}|=Cd$ for all $i\in [n]$,
		\item$\delta(G[V_i])\ge (1+\tfrac{\varepsilon}{2})\frac{Cd}{2}$, and
		\item$\delta(G[V_i,V_j])\ge (1+\tfrac{\varepsilon}{2})\frac{Cd}{2}$ for all $ij\in E(H)$.
	\end{enumerate}
Then, for each $ij\in E(H)$ pick vertices $v_{i,j}\in V_i$ and $v_{j,i}\in V_j$ such that $v_{i,j}v_{j,i}\in E(G)$, and, for $i\in [n]$, pick an arbitrary vertex $v_i\in V_i$, choosing new vertices at each time. For each $i\in [n]$, use Lemma~\ref{lemma:partition:2}, with parameter $\delta=\varepsilon/4$, to find a partition $V_i=\{v_i\}\cup\{v_{i,j}:j\in N_H(i)\}\cup \bigcup_{j\in N_H(i)}V_{i,j}'$ such that 
\begin{itemize}
    \item $|V'_{i,j}|\in\{C-2,C-1\}$ for all $j\in N_H(i)$,
    \item $d(v_{i,j},V'_{i,j}), d(v_i,V'_{i,j})\ge (1+\tfrac{\varepsilon}{4})\frac{|V'_{i,j}|}{2}$, and
    \item $\delta(G[V'_{i,j}])\ge (1+\tfrac{\varepsilon}{4})\frac{|V'_{i,j}|}{2}$.
\end{itemize}
 Finally, for each $i\in [n]$ and $j\in N_H(i)$, we set $V_{i,j}=V_{i,j}'\cup \{v_i,v_{i,j}\}$, which clearly satisfies \ref{B:1}--\ref{B:6}.
\end{proof}\end{proof}
\section{Concluding remarks}Let us observe that having a host graph on $N=Cdn$ vertices in Theorem~\ref{theorem:main} is just an artificial setup to make the subdivision as balanced as possible. However, by adjusting the length of the connecting paths, the same arguments work if we put $N=\Theta(dn)$ or if we drop the condition of being $d$-regular, that is, one can show that if the host graph $G$ has $N=\Theta(m)$ vertices, where $m\le \binom{n}{2}$, and $H$ is an $n$-vertex graph with $m$ edges and minimum degree at least $\log n$, then $G$ contains a spanning $H$-subdivision. Having said this, it is tentative to conjecture that the condition $d\ge \log n$ in Theorem~\ref{theorem:main} could be totally dropped.

\begin{conjecture}\label{conjecture:1}For every $\varepsilon>0$, there exists a positive constant $C_0$ such that for all $C\ge C_0$ and $m\in\mathbb N$ the following holds.  Let $G$ be a graph on $N=Cm$ vertices and minimum degree $\delta(G)\ge (1+\varepsilon)\frac{N}{2}$. Then, $G$ contains a spanning $H$-subdivision of every graph $H$ with $m$ edges and no isolated vertices.
\end{conjecture}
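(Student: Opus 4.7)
The plan is to build the desired partition in two stages using the two random-partitioning tools just proved: the outer stage produces one block of $G$ per vertex of $H$ via Lemma~\ref{lemma:partition:1}, and the inner stage refines each such block into one part per incident edge of $H$ via Lemma~\ref{lemma:partition:2}.

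Concretely, I would first apply Lemma~\ref{lemma:partition:1} with $\alpha=(1+\varepsilon)/2$ and an auxiliary parameter of order $\varepsilon$ to obtain an $(H,\alpha')$-good partition $V(G)=V_1\cup\dots\cup V_n$ with $|V_i|=Cd$ and $\alpha'=1/2+\Omega(\varepsilon)$. This yields $\delta(G[V_i])\ge\alpha' Cd$ and, for every $ij\in E(H)$, $\delta(G[V_i,V_j])\ge\alpha' Cd>0$. Using these bipartite degrees, for each edge $ij\in E(H)$ I would pick adjacent vertices $v_{i,j}\in V_i$ and $v_{j,i}\in V_j$ so that~\ref{B:6} holds, and then pick an additional branch vertex $v_i\in V_i$ for each $i\in[n]$. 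Since only $d+1$ special vertices are required from each $V_i$ and $|V_i|=Cd\gg d$, this greedy selection succeeds while keeping all chosen vertices distinct.

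Next, for each $i\in[n]$ I would apply Lemma~\ref{lemma:partition:2} inside $G[V_i]$, again with a parameter of order $\varepsilon$, treating $v_i$ together with the $d$ vertices $\{v_{i,j}:j\in N_H(i)\}$ as the distinguished vertices $v_0,v_1,\ldots,v_d$. This partitions $V_i\setminus(\{v_i\}\cup\{v_{i,j}:j\in N_H(i)\})$ into blocks $V'_{i,j}$ of size in $\{C-2,C-1\}$ with $\delta(G[V'_{i,j}])\ge(1+\Omega(\varepsilon))|V'_{i,j}|/2$, and with both $d(v_i,V'_{i,j})$ and $d(v_{i,j},V'_{i,j})$ at least $(1+\Omega(\varepsilon))|V'_{i,j}|/2$. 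Setting $V_{i,j}=V'_{i,j}\cup\{v_i,v_{i,j}\}$ then gives the candidate blocks.

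It remains to verify~\ref{B:1}--\ref{B:6}. Items~\ref{B:1}--\ref{B:4} and~\ref{B:6} are immediate from the construction. For~\ref{B:5}, every vertex of $V'_{i,j}$ retains at least $(1+\Omega(\varepsilon))|V'_{i,j}|/2$ neighbours in $V_{i,j}$, while $v_i$ and $v_{i,j}$ have at least that many neighbours in $V'_{i,j}$ by the conclusion of Lemma~\ref{lemma:partition:2}; since $|V_{i,j}|=|V'_{i,j}|+2$ and $1/C\ll\varepsilon$, the $\Omega(\varepsilon)|V'_{i,j}|$ slack dominates the additive $O(1)$ correction needed to reach $(|V_{i,j}|+1)/2$. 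The only real obstacle is parameter bookkeeping: the outer partition must leave enough degree slack so that, after Lemma~\ref{lemma:partition:2} eats another $\Omega(\varepsilon)$ of it, the surviving slack still beats the additive correction from enlarging $V'_{i,j}$ by the two pivots. This is precisely why the argument needs $1/C\ll\varepsilon$, and why Lemma~\ref{lemma:partition:2}, which controls the degree of the specified pivots rather than only a typical vertex, is required in place of a naive uniform refinement of each $V_i$.
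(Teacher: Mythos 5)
The statement you are attempting is Conjecture~\ref{conjecture:1}, which the paper does not prove; it is stated precisely because the method you are reproducing does not reach it. Your proposal is essentially a verbatim re-run of the proof of Theorem~\ref{theorem:main} (Claim~\ref{claim:partition} followed by Ore's lemma), and that argument has a hard quantitative dependence on the hypothesis $d\ge\log n$, which the conjecture deliberately drops. Concretely, the first step of your plan invokes Lemma~\ref{lemma:partition:1}, whose proof ends with the union bound $2Cn^{3}e^{-2\delta^{2}Cd}\le 2Cn^{3}e^{-2\delta^{2}C\log n}<1$: the exponential tail must beat the logarithm of the number of vertex--part events, and it only does so because each part has size $Cd\ge C\log n$. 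If $H$ is, say, a perfect matching on $n$ vertices (so $m=n/2$, every degree equals $1$, and there are no isolated vertices), your outer partition would consist of $n$ parts of constant size $C$; the per-event failure probability is then the constant $e^{-\Theta(C)}$ and the union bound over $\Theta(Cn)$ events is useless. In fact, a uniformly random partition into constant-size parts will with high probability leave some vertex with no neighbour at all in a part it needs, so the failure is not an artifact of loose constants but of the first-moment method itself. A secondary (more easily fixable, but unaddressed) issue is that for non-regular $H$ the parts $V_i$ must have sizes proportional to the varying degrees $d_H(i)$, whereas Lemma~\ref{lemma:partition:1} as stated produces parts of equal size.

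The genuine content of the conjecture is exactly the regime of sparse or irregular $H$ that random partitioning cannot handle, and a new idea is required there; the paper's concluding remarks note that the conjecture was subsequently resolved by Lee via the absorption method, a structurally different argument in which one sets aside an absorbing structure to repair local failures rather than asking a single random partition to succeed everywhere simultaneously. Your write-up is an accurate account of how Theorem~\ref{theorem:main} is proved, but as a proof of Conjecture~\ref{conjecture:1} it breaks at the very first step.
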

After this article was submitted, Conjecture~\ref{conjecture:1} was solved by Lee~\cite{lee2023spanning} using the \textit{absorption method}. The only drawback in Lee's proof is that it requires one of the paths in the subdivision to be very long compared with the other paths. So, it would be still interesting to find a spanning subdivision where all the paths in the subdivision have nearly the same length. 
\begin{question} Is it possible to find a spanning $H$-subdivision in the context of Conjecture~\ref{conjecture:1} where all the paths in the subdivision have similar lengths?
\end{question}
Let us finish by mentioning that similar arguments as those in the proof of Theorem~\ref{theorem:main} can be used to find a perfect $H$-subdivision tiling, where $H$ is a graph whose size could even grow depending on the size of the host graph. However, a more precise result was shown by Lee~\cite{lee2023perfect} in this context. Indeed, the leading constant in the threshold for containing a perfect $H$-subdivision tiling can be smaller than $\frac{1}{2}$ in certain cases.

\bibliographystyle{abbrv}
\bibliography{subdivision-dirac.bib}

\end{document}